\newtheorem{theorem}{Theorem}[section]
\newtheorem{lemma}[theorem]{Lemma}
\newtheorem{rem}[theorem]{Remark}
\newcommand{\R}{\mathbb{R}}
\newcommand{\Z}{\mathbb{Z}}
\newcommand{\E}{\mathbb{E}}
\newcommand{\mum}{{\boldsymbol\mu}}
\newcommand{\Zm}{{\mathbf Z}}
\newcommand{\calH}{\mathcal{H}}
\newcommand{\equa}[1]{\begin{eqnarray} #1 \end{eqnarray}}
\newcommand{\ind}[1]{\mathbbm{1}_{\left[ {#1} \right] }}
\newcommand{\ee}{\mathbf e}
\newcommand{\bfa}{\mathbf a}
\newcommand{\bfq}{\mathbf q}
\title{A second note on the discrete Gaussian Free Field \\ with
  disordered pinning on $\Z^d$, $d\geq2$}
\author{Loren Coquille, Piotr Mi\l{}o\'s}
\date{\today}
\begin{document}
\maketitle

\begin{abstract}
We study the discrete massless Gaussian Free Field on $\Z^d$, $d\geq2$,
in the presence of a disordered square-well potential supported on a finite strip
around zero. The disorder is introduced by reward/penalty
interaction coefficients, which are given by i.i.d. random
variables. 

In the previous note \cite{CoqMil2013_1}, we proved under
minimal assumptions on the law of the environment, that the quenched free energy associated to this model
exists in $\R^+$, is deterministic, and strictly smaller than the
annealed free energy whenever the latter is strictly
positive. 

Here we consider Bernoulli reward/penalty coefficients~$b\cdot e_x+h$ with $e_x\sim
\text{Bernoulli}_{1/2}(-1,+1)$ for all $x\in\Z^d$, and $b>0$,
$h\in\R$. We prove that in the plane
$(b,h)$, the quenched critical line
(separating the phases of positive and zero free energy) lies strictly
below the line $h=0$, showing in particular that there exists a
non trivial region where the field is localized though repulsed on
average by the environment. 

\end{abstract}
\strut

\noindent \textbf{Keywords : }  Random interfaces, random surfaces, pinning,
disordered systems, Gaussian free field.\\
\textbf{MSC2010 :} 60K35, 82B44, 82B41.
  

\section{The model}  

We study the discrete Gaussian Free Field with a disordered
square-well potential. For $\Lambda$ a finite subset of $\Z^d$, denoted by $\Lambda\Subset\Z^d$, let $\varphi=(\varphi_x)_{x\in\Lambda}$
represent the heights over sites of
$\Lambda$. The values of $\varphi_x$ can also be seen as continuous
unbounded (spin) variables, we will refer to $\varphi$ as ``the
interface'' or ``the field''.
 
Let $\Omega=\R^{\Z^d}$ be the set of configurations. The finite volume Gibbs measure in $\Lambda$ for the discrete Gaussian Free Field with disordered square-well potential,
and $0$ boundary conditions, is the probability measure on $\Omega$
defined by :  
\equa{\label{astrip_measure}
\mu_{\Lambda}^{\ee,0}(d\varphi)=\frac1{Z_{\Lambda}^{\ee,0}}\exp\left({-\beta
    \calH_\Lambda(\varphi)+\beta\sum_{x\in\Lambda}
 (b\cdot e_x +h) \ind{\varphi_x\in[-a,a]}}\right)\prod_{x\in\Lambda}d\varphi_x\prod_{y\in\Lambda^c}\delta_0(d\varphi_y).}
where $a,\beta,b >0 $, $h\in\R$ and $\calH_\Lambda(\varphi)$ is given by 
\begin{equation}
	\calH_\Lambda(\varphi)=\frac1{4d}\sum_{\substack{\{x,y\}\cap\Lambda\neq\varnothing
	    \\ x\sim y}}(\varphi_x-\varphi_y)^2,\label{eq:gffHamiltonian}
\end{equation}
where $x\sim y$ denotes an edge of the graph $\Z^d$ and $\ind{A}$ denotes the indicator function of $A$. An environment is denoted as
$\ee:=(e_x)_{x\in\Lambda}$.
We consider here $\ee$ given by i.i.d.\ random
variables $$e_x\sim\text{Bernoulli}_{1/2}(-1,+1).$$ 
The parameter $b$ is usually called the ``intensity of the disorder'',
while $h$ is its average. 
The disordered potential attracts or repulses the field at heights belonging to $[-a,a]$. 
$Z_{\Lambda}^{\ee,0}$ is the partition function, i.e.\ it normalizes
$\mu_{\Lambda}^{\ee,0}$ so it is a probability measure. The
superscipt $0$ reminds the boundary condition, it is added to the
notation compared to \cite{CoqMil2013_1} because it will be useful below.
We stress that our model contains two levels of randomness. The first one is $\ee$ which we refer to as ``the environment''. The second one is the actual interface model whose low depends on the realization of $\ee$. 

The inverse temperature parameter $\beta$ enters only in a trivial way. Indeed, if we replace
the field $(\varphi_x)_{x\in\Lambda}$ by
$(\sqrt{\beta}\phi_x)_{x\in\Lambda}$, $a$ by $\sqrt{\beta}a$, and
$(b\cdot e_x+h)_{x\in\Lambda}$ by
$({\beta}(b\cdot e_x+h))_{x\in\Lambda}$ we have transformed the model to
temperature parameter $\beta=1$. In the sequel, we will therefore work
with $\beta=1$.

The dimensions 1 and 2 are physically relevant as interface
models. In this paper we focus on $d\geq2$ since 1-dimensional
models have been well-studied in the last decade
(see \cite{CoqMil2013_1} for a historical introduction).

The questions we are addressing in this framework are the usual ones
concerning statistical mechanics models in random environment : Is the quenched
free energy non-random ? Does it differ from the annealed one ? Can we
give a physical meaning to the strict positivity (resp. vanishing) of
the free energy ? What can be said concerning the
quenched and annealed critical lines (surfaces) in the space of the relevant
parameters of the system ? 

In the previous note \cite{CoqMil2013_1}, we proved under
minimal assumptions on the law of the environment, that the quenched free energy associated to this model
exists in $\R^+$, is deterministic, and strictly smaller than the
annealed free energy whenever the latter is strictly
positive. 

Here we investigate the phase diagram of the model : in the
  plane $(b,h)$, we prove that the quenched critical line (separating the
phases of positive and zero free energy) lies strictly below the line
$h=0$. Thus there exists a non trivial region where the
field is localized though repulsed on average by the
environment.

\section{Results}

We define the quenched (resp. annealed) free
energy per site in $\Lambda \Subset \Z^d$ by :
\begin{equation}
	f^\bfq_{\Lambda} (\ee)=|\Lambda|^{-1}\log \left(\frac{Z_{\Lambda}^{\ee,0}}{Z_\Lambda^{0,0}}\right) ,\quad
	f_{{\Lambda}}^\bfa(\ee)=|\Lambda|^{-1}\log\left(\frac{\E Z_{\Lambda}^{\ee,0}}{Z_\Lambda^{0,0}}\right), \label{eq:freeEnergyDef}
\end{equation}
where $Z_\Lambda^{0,0}$ denotes the partition function of the model with
no potential, $e_x \equiv 0$ (i.e. of the Gaussian free field). 
In the case when $\Lambda = \Lambda_n=\lbrace 0, ..., n-1\rbrace^d$
we will use short forms $f^\bfq_{n} (\ee)$ and $f^\bfa_{n} (\ee)$. By
the Jensen inequality, we have $f^\bfq(\ee)\leq
f^\bfa(\ee)$. Moreover, it
is not difficult to see that the annealed model corresponds to the
model with constant (we will also say homogenous) pinning with the strength
\begin{equation}\ell(\ee):=\log(\E(e^{b\cdot e_x+h})).\label{eq:effectiveAnnealedPotential}\end{equation}
for all $x\in\Lambda$. In other words $ \E
Z^{\ee,0}_{\Lambda}=Z^{\ell(\ee),0}_{\Lambda}$. 

In \cite[Fact 2.3]{CoqMil2013_1} we proved that for any environment 
$\ee$ such that the annealed model exists, i.e.\ $\E(e^{b\cdot e_x+h})<\infty$, both the quenched and annealed free energies are
non-negative. 
This motivates the following notions. We introduce the
quenched (resp. annealed) critical lines, which are delimiting the
region where $f^\bfq(\ee)=0$ (resp. $f^{\bfa} (\ee)=0$) from the region
$f^\bfq(\ee)>0$ (resp. $f^{\bfa} (\ee)>0$). 
$$ h_c^\bfq(b):=\sup\lbrace h\in\R :
f^\bfq(\ee) =0\rbrace\quad \mbox{ and }\quad 
{h}_c^{\bfa}(b):=\sup\lbrace h\in\R :
f^{\bfa}(\ee)=0\rbrace$$

We are interested in describing the
behavior of these quantities in the phase diagram described by the
plane $(b,h)$.
\noindent Knowing the behavior of the
homogenous model for positive pinning \cite{BolVel2001}, we easily deduce that the annealed critical line is
given by the equation $ \ell(\ee)=0$.\\
Note that $f^\bfq(\ee)\leq f^{\bfa} (\ee)$ implies that
$h_c^\bfq(b)\geq{h}_c^\bfa(b).$ 
In Theorem \ref{thm:shift_quenched} we show that the quenched critical
line lies strictly below the axis $h=0$ in the neighborhood of
$b=0$ for all $d\geq 2$. 
Our
result shows in particular that there exists a non
trivial region where $h<0$, $b>0$ and $f^\bfq(\ee)>0$, i.e.\ where the
field is localized though it is repulsed on average by the
environment.
 
Note that we don't have any estimate on the
behavior of
$h_c^\bfq(b)-{h}_c^\bfa(b)$.

\begin{theorem}  
\label{thm:shift_quenched}
Let $\ee\sim\otimes_{x\in\Z^d}\text{Bernoulli}_{1/2}(-1,+1)$. Then,\\

\noindent For $d\geq2$, the quenched critical line is
located in the quadrant $\lbrace (b,h) : b\geq0,
h<0 \rbrace$.\\
  
\noindent More precisely, there exists some $C, C'>0$
depending on $d,a$ only and
$\epsilon\in(0,1)$ such that
for any environment $\ee$ which fulfills $b+h>0$, $-\epsilon<-b+h<0$ and 
$$ \begin{cases}
h>\frac{C'(-b+h)^2}{\log(b-h)} & \mbox{ for } d=2\\
h>- C\cdot(-b+h)^2 &\mbox{ for } d\geq3,
\end{cases}$$
we have $f^\bfq(\ee)>0$.
\end{theorem}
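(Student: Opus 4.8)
The plan is to reduce the claim to a finite-volume criterion and then to extract strict positivity from a second-order expansion in the disorder intensity~$b$, the crucial feature being that the environment is \emph{centered} ($\E[e_x]=0$), so the first-order contribution of the disorder vanishes and the leading disorder effect is a manifestly positive variance term.

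First I would set up the reduction. Since $f^{\bfq}(\ee)$ exists, is deterministic \cite{CoqMil2013_1}, and equals $\lim_N |\Lambda_N|^{-1}\E\log(Z^{\ee,0}_{\Lambda_N}/Z^{0,0}_{\Lambda_N})$, I would argue by coarse-graining that it suffices to exhibit a \emph{single} scale $N$ with $\E\log(Z^{\ee,0}_{\Lambda_N}/Z^{0,0}_{\Lambda_N})>0$. Concretely, one tiles a large box by translates of $\Lambda_N$ and decouples them through the domain Markov property of the Gaussian field, choosing the boundary conditions so that the finite-volume functional bounds $f^{\bfq}$ from below while the gluing costs only a surface term $o(|\Lambda_N|)$. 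Making this surface term genuinely negligible \emph{at the scale that will matter} is the first delicate point, because, as will be seen, the useful scale $N$ grows as $b\downarrow0$.

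Next comes the expansion, which I would make exact rather than formal. Write $V_1=\sum_x e_x\ind{\varphi_x\in[-a,a]}$, $S=\sum_x\ind{\varphi_x\in[-a,a]}$, let $\langle\,\cdot\,\rangle_t$ denote the probability measure on $\Lambda_N$ proportional to $\exp(tbV_1+hS)\,dP^{0}$ (with $P^0$ the $0$-pinning free field), and set $g(t)=\E\log\langle e^{tbV_1+hS}\rangle_{P^0}$. Because $\E[e_x]=0$ one has $g'(0)=b\,\E\langle V_1\rangle=b\sum_x\E[e_x]\rho_x=0$, while $g''(t)=b^2\,\E\,\mathrm{Var}_t(V_1)\ge0$; Taylor's formula with integral remainder then yields the exact identity
\[
\E\log\frac{Z^{\ee,0}_{\Lambda_N}}{Z^{0,0}_{\Lambda_N}}
=\log\frac{Z^{h}_{\Lambda_N}}{Z^{0,0}_{\Lambda_N}}
+b^2\int_0^1(1-t)\,\E\big[\mathrm{Var}_t(V_1)\big]\,dt,
\]
where $Z^{h}_{\Lambda_N}$ is the homogeneous model of strength $h$ (i.e.\ $b=0$). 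The first term is $\le0$ for $h\le0$ but is bounded below by Jensen through $\log\langle e^{hS}\rangle_{P^0}\ge h\langle S\rangle_{P^0}=h\sum_x\rho_x$, with $\rho_x=P^0(\varphi_x\in[-a,a])$. At $t=0$ the environment average is purely diagonal, $\E\,\mathrm{Var}_0(V_1)=\sum_x\rho_x^{h}(1-\rho_x^{h})$, since $\E[e_xe_y]=\delta_{xy}$, and $\rho^h_x\approx\rho_x$ for small $|h|$. Hence the box free-energy density is, to leading order, $\bar\rho_N\big(h+\tfrac{b^2}{2}(1-\bar\rho_N)\big)$.

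This already displays the mechanism: the centered environment produces a strictly positive second-order gain $\tfrac{b^2}{2}\sum_x\rho_x(1-\rho_x)$ — the field profits from local fluctuations of the disorder — which beats the first-order cost $h\sum_x\rho_x$ exactly when $h>-\tfrac{b^2}{2}(1-\bar\rho_N)$; choosing $N$ below the entropic-repulsion crossover keeps $\bar\rho_N$ close to a dimensional constant $\rho_\infty(d,a)\in(0,1)$ and produces the announced threshold $h>-C(-b+h)^2$. The remaining and hardest work is to control $\mathrm{Var}_t(V_1)$ for $t>0$, i.e.\ the off-diagonal terms $\sum_{x\neq y}\E[e_xe_y\,\mathrm{Cov}_t(\ind{\varphi_x\in[-a,a]},\ind{\varphi_y\in[-a,a]})]$ generated once the tilt $tb$ deforms $P^0$; these are governed by lattice-Green-function correlation sums of the field, and this is precisely where the two regimes split. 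For $d\ge3$ the walk is transient, these sums converge, the remainder is uniformly $O(b^4)$, one may take $N\asymp b^{-1}$, and the clean bound $h>-C(-b+h)^2$ follows. For $d=2$ the walk is recurrent, $\mathrm{Var}_{P^0}(\varphi_x)\sim c\log N$ forces $\rho_x\sim(\log N)^{-1/2}\to0$ and the correlation sums diverge with $N$, so no single large scale is admissible: one must optimize at $\log N\asymp|\log(-b+h)|$, which erodes the gain by a logarithmic factor and yields $h>C'(-b+h)^2/\log(b-h)$. I expect the main obstacle to be exactly this simultaneous balancing in $d=2$ — keeping the box below the entropic-repulsion scale, above the surface-cost scale of the reduction, and inside the radius where the positive second-order term dominates the off-diagonal remainder — which is what forces the logarithmic correction and genuinely separates the two dimension regimes.
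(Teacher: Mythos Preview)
Your strategy—interpolate in the disorder strength, use $\E[e_x]=0$ to kill the first order, and harvest the non-negative variance $\E\mathrm{Var}_t(V_1)$ at second order—is a legitimate line of attack, but it is \emph{not} what the paper does, and two of the steps you flag as ``delicate'' are in fact the places where your outline is incomplete.

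The paper never reduces to a finite block nor expands in $b$. Instead it performs an \emph{explicit environment-following change of measure}: under the reference GFF with boundary condition $a$, it shifts the field by $s\,\ind{b e_x+h<0}$ and applies Jensen directly to $f^{\bfq}_{\Lambda_n}$. The gain is $(-b+h)\big(\rho-\rho_s\big)\asymp C_1(d,a)\,s\,(b-h)$ on the repulsive half of the sites, the cost is the Dirichlet energy of the shift $\asymp s^2$ on disagreeing edges, and the resulting one-variable quadratic in $s$ is optimized. Because every estimate is \emph{uniform in $n$}, one simply sends $n\to\infty$; no block decoupling is needed. For $d=2$ the divergence of the variance is handled not by choosing a box size but by inserting a second tilt to the \emph{massive} free field (mass $m$), paying a price $C m^2|\log m|$ from the ratio $\Zm_m/Z$, and then optimizing jointly in $(s,m)$. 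The logarithm in the final bound comes from the $|\log m|$ in the massive variance, not from a scale $N$.

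Compared with this, your route has two genuine gaps. First, the coarse-graining: for a continuous-height GFF there is no cheap restriction to ``$\varphi=0$ on internal boundaries'', and you give no argument why positivity of $\E\log(Z^{\ee,0}_{\Lambda_N}/Z^{0,0}_{\Lambda_N})$ at one $N$ propagates to $f^{\bfq}>0$. In $d=2$ this is exactly where you need the mechanism, since your per-site bound \emph{degrades} as $N\to\infty$ (because $\bar\rho_N\to0$); the paper sidesteps this by making the bound uniform in $n$ through the mass. Second, your control of $\int_0^1(1-t)\,\E\mathrm{Var}_t(V_1)\,dt$ away from $t=0$ is only asserted: the measure $\langle\cdot\rangle_t$ depends on $\ee$ for $t>0$, so $\E[e_xe_y\,\mathrm{Cov}_t(\mathbf 1_x,\mathbf 1_y)]$ no longer reduces to the diagonal, and bounding this below by a volume-order positive quantity uniformly in $t$ and $N$ requires real work that you do not supply. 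The paper's shift is, in effect, a variational surrogate for this variance lower bound that avoids the $t>0$ analysis altogether.

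In short: your second-moment picture correctly identifies \emph{why} centered disorder pins the field even at slightly negative $h$, and for $d\ge3$ it could plausibly be completed by taking $n\to\infty$ directly (no blocks) once you control the third cumulant of $V_1$. For $d=2$, however, the optimization over $N$ does not obviously transfer to the thermodynamic limit, and the paper's introduction of a mass is precisely the device that replaces your missing coarse-graining step.
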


\begin{rem}
\begin{enumerate}
\item A sketch of these bounds in the plane
  ($b,h$) can be seen on
  Figure~\ref{fig:critical_curves}. Moreover, the bound for $d\geq3$ can be rewritten
  as $h>-C''(d,a)\cdot b^2$.  
\item Jensen's inequality gives us an upper bound on
$C, C'$. Indeed, as $f^\bfa (\ee)\geq f^\bfq(\ee)$, if $f^\bfa (\ee)=0$ then
$f^\bfq(\ee)=0$. In particular, we must have $-C\leq
\frac{\partial^2}{\partial b^2}{h_c\vert}_{b=0}<0$. Our
result gives thus an upper-bound on the behavior of the quenched
critical line near $b=0$.
\end{enumerate}
\end{rem}
    
\begin{figure}[h!]
    \centering
    \includegraphics[width=6.5cm]{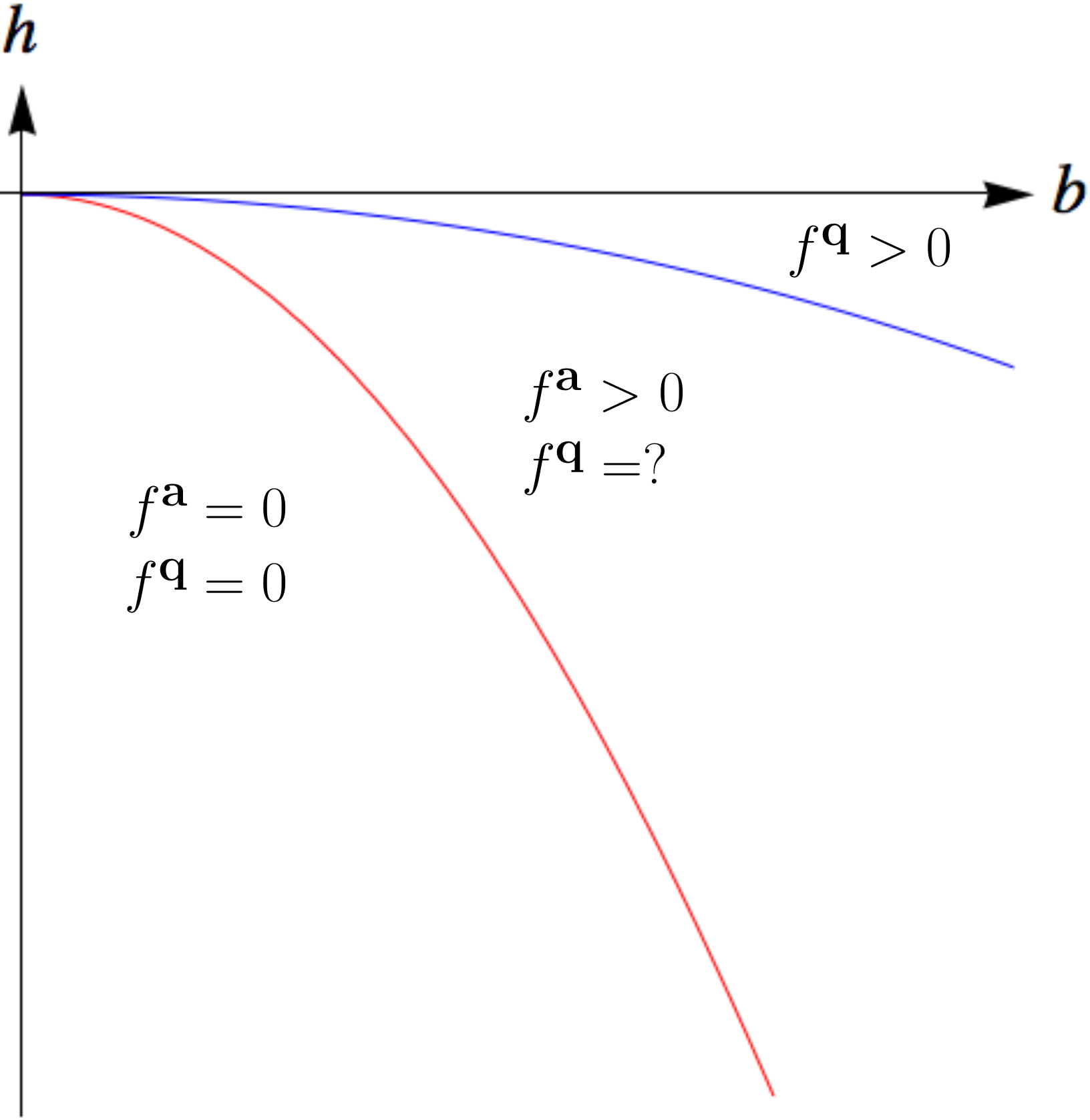}
    \caption{(Color online) {Phase diagram of the model. The red curve is the
        annealed critical line; the blue one is our bound on the quenched critical line.\label{fig:critical_curves}    
      }}
\end{figure}

\subsection{Related results}

The same type of result has been proven for (1$d$) polymer models in
great generality in \cite{AleSid2006}, where Alexander and Sidoravicius consider a
polymer, with monomer locations modeled by the trajectory of a Markov
chain $(X_i)_{i\in\Z}$, in the presence of a potential {(usually called a ``defect line'')} that interacts with the polymer
when it visits 0. Formally, the model is given by weighting the realization of the chain with the Boltzmann term
$$ \exp\left(\beta\sum_{i=1}^n(u+V_i)\ind{X_i=0}\right).$$
with $(V_i)_{i\in\Z}$ an i.i.d.\ sequence of $0$-mean random variables.
They studied the localization transition in this model.
We say that the polymer is pinned, if a positive fraction of monomers is at 0. {In the plane $(\beta, u)$
critical lines are defined as above: for $\beta$ fixed, $u_c^\bfq(\beta)$
(resp. $u_c^{\bfa}(\beta)$) is the value of $u$ above which the polymer is pinned
with probability 1
(for the quenched (resp. annealed) measure)}. They showed that the quenched free
energy and critical point are non-random, calculated the critical point for a deterministic
interaction (i.e.\ $V_i \equiv 0$) and proved 
that the critical point in the quenched case is strictly smaller. 

When the underlying chain is a symmetric simple random walk
on $\Z$, the deterministic critical point is $0$, so having the quenched critical point $u_c(\beta)$ strictly
negative means that, even when the disorder is repulsive on average,
the chain is pinned. This result was obtained by Galluccio and Graber
in \cite{GalGra1996} for a periodic potential, which is frequently
used in the physics literature as a ``toy model'' for random
environments.

A much shorter proof with explicit bounds can be found in
\cite{Gia2007}, in less generality, but \cite{Hol2009} contains a revisited proof with explicit estimates and
weakening the assumptions on the underlying model.

Note that for polymers, or discrete height interfaces, one need a
coarse graining procedure to achieve the proof.
In our case, as we will see in the next section, we can shift the continuous interface where the
environment is unfavorable, and this has a small cost in dimension
$d\geq3$. The procedure is a bit more complicated in dimension 2 and
we have to localize the field before by introducing a small mass.

\subsection{Proof of Theorem \ref{thm:shift_quenched}}
\subsubsection{Case $d\geq3$}
We assume $-b+h<0<b+h$ i.e.\ the environment is repulsive if $e_x=-1$
while it is attractive if $e_x=+1$.

The idea is to tilt the measure such that the field $\varphi$ is
shifted up of an amount $s$ on the sites $x$ for which $b\cdot e_x+h<0$. In
this way the shift of the field follows the
environment. For some technical
reasons, we need to work with the measure with boundary condition $a$,
so perform two changes of measure (first one changing boundary
condition and the second one to following the environment).
Let $s>0$ (to be fixed later).
\[
f^\bfq_{\Lambda_n}(\ee)=n^{-d}\log\mu^{0,a}_{\Lambda_n,\ee,s}\left(\frac{\mathrm{d}{\mu_{\Lambda_n}^{0,a}}}{\mathrm{d}{\mu^{0,a}_{\Lambda_n,\ee,s}}}\frac{\mathrm{d}{\mu_{\Lambda_n}^{0,0}}}{\mathrm{d}{\mu^{0,a}_{\Lambda_n}}}\exp\left(\sum_{x\in\Lambda_n}(b\cdot e_x+h)\ind{\varphi_x\in[-a,a]}\right)\right),
\]
where $\left({\varphi_{x}}\right)_{x\in\Lambda_n}$ under $\mu^{0,a}_{\Lambda_n,\ee,s}$
is distributed as $\left({\varphi_{x}+s\ind{{(b\cdot e_x+h)<0}}}\right)_{x\in\Lambda_n}$
under $\mu^{0,a}_{\Lambda_n}$. More formally, introducing 
$T_{\ee,s}:
(\left({\varphi_{x}}\right)_{x\in\Lambda_n})\mapsto\left({\varphi_{x}+s\ind{{(b\cdot e_x+h)<0}}}\right)_{x\in\Lambda_n}$,
we define $\mu^{0,a}_{\Lambda_n,\ee,s}$ as $\mu^{0,a}_{\Lambda_n}\circ
T_{\ee,s}^{-1}$. \\
Using Jensen's inequality, we get
\begin{eqnarray*}
f_{\Lambda_n}^\bfq(\ee)&=&n^{-d}\log\left[\mu^{0,a}_{\Lambda_n,\ee,s}\exp\left(\sum_{x\in\Lambda_n}(b\cdot e_x+h)\ind{\varphi_x\in[-a,a]}
+\log\frac{\mathrm
      d{\mu^{0,a}_{\Lambda_n}}}{\mathrm d{\mu^{0,a}_{\Lambda_n,\ee,s}}}
+\log\frac{\mathrm
      d{\mu ^{0,0}_{\Lambda_n}}}{\mathrm d{\mu^{0,a}_{\Lambda_n}}}\right)\right]\\
&\geq&
n^{-d}\mu^{0,a}_{\Lambda_n,\ee,s}\left(\sum_{x\in\Lambda_n}(b\cdot e_x+h)\ind{\varphi_x\in[-a,a]}
+\underbrace{\log\frac{\mathrm
      d{\mu^{0,a}_{\Lambda_n}}}{\mathrm d{\mu^{0,a}_{\Lambda_n,\ee,s}}}}_{(1)}
+\underbrace{\log\frac{\mathrm
      d{\mu ^{0,0}_{\Lambda_n}}}{\mathrm d{\mu^{0,a}_{\Lambda_n}}}}_{(2)}\right)\\
\end{eqnarray*}
As $Z ^{0,0}_{\Lambda_n,\ee,s}=Z ^{0,0}_{\Lambda_n}$ (which follows by change of variables in the Gaussian
integral), the first term can be written as
\[
(1)=-\frac1{4d}\sum_{\substack{\{x,y\}\cap\Lambda_n\neq\varnothing\\x\sim
  y}}(\varphi_x-\varphi_y)^2-(\hat\varphi_x-\hat\varphi_y)^2
\]
where $\hat\varphi_x:=\varphi_x+s\ind{b\cdot e_x+h<0}$. Hence, using the
definition of $\mu^{0,a}_{\Lambda_n,\ee,s}$,
\begin{eqnarray*}
n^{-d} \mu^{0,a}_{\Lambda_n,\ee,s}((1))
&=&-\frac{n^{-d}}{4d} \mu^{0,a}_{\Lambda_n}\left(\sum_{\substack{\{x,y\}\cap\Lambda_n\neq\varnothing\\x\sim
  y}}(\hat\varphi_x-\hat\varphi_y)^2-(\varphi_x-\varphi_y)^2\right)\\
&=& -\frac{s^2n^{-d}}{4d} \sum_{\substack{\{x,y\}\cap\Lambda_n\neq\varnothing\\x\sim
  y}} (\ind{b\cdot e_x+h<0}-\ind{b\cdot e_y+h<0})^2
\end{eqnarray*}
The second term contains only boundary contribution of order
$n^{d-1}$. Indeed,
\[
(2)=\left(2a\sum_{x\in\partial\Lambda_n}\varphi_x-a^2\vert\partial\Lambda_n\vert\right)+\log\left(\frac{Z^{0,a}_{\Lambda_n}}{Z ^{0,0}_{\Lambda_n}}\right)
\geq
\left(2a\sum_{x\in\partial\Lambda_n}\varphi_x-a^2\vert\partial\Lambda_n\vert\right)-C n^{d-1}
\] 
Hence,
\[
n^{-d}\mu^{0,a}_{\Lambda_n,\ee,s}((2))\geq
{2a}\cdot{n^{-d}}\sum_{x\in\partial\Lambda_n}\mu^{0,a}_{\Lambda_n} (\hat\varphi_x)-C
n^{-1} 
\geq  s\sum_{x\in\partial\Lambda_n}\ind{b\cdot e_x+h<0}-Cn^{-1}
\geq -Cn^{-1}
\] 
We get
\begin{eqnarray*}
f_{\Lambda_n}^\bfq(\ee)&\geq&
n^{-d}\sum_{x\in\Lambda_n}(b\cdot e_x+h)\mu^{0,a}_{{\Lambda_n}}(\hat\varphi_x\in[-a,a])
-\frac{s^2n^{-d}}{4d} \sum_{\substack{\{x,y\}\cap\Lambda_n\neq\varnothing\\x\sim
  y}} (\ind{b\cdot e_x+h<0}-\ind{b\cdot e_y+h<0})^2-Cn^{-1}
\end{eqnarray*}  
Now we use the fact that the marginal laws of all $\varphi_x$, $x\in\Lambda_n$ under
$\mu^{0,a}_{{\Lambda_n}}$ are Gaussian variables centered at $a$, i.e.\ $\varphi_{x}\sim\mathcal{N}(a,\sigma_n^x)$ where
$\sigma_n^x=\mbox{Var}^{0,a}_{\Lambda_n} (\varphi_x)\leq
\mbox{Var}^{0,a}_\infty(\varphi_x) \leq c(d)<\infty$ for $d\geq 3$. Therefore,
\begin{eqnarray}\label{overlap_gaussian}
\mu^{0,a}_{{\Lambda_n}}({\varphi_{x}\in[-a,a]})-\mu^{0,a}_{{\Lambda_n}}({\varphi_{x}+s\in[-a,a]})
&=& \mu ^{0,0}_{{\Lambda_n}}({\varphi_{x}\in[-2a,0]})-\mu ^{0,0}_{{\Lambda_n}}({\varphi_{x}\in[-2a-s,-s]})\nonumber\\
&=&C\left( \int_{-s}^0-\int_{-2a-s}^{-2a} \right) e^{-y^2/2{\sigma_n^x}^2}
dy\nonumber \\
&\asymp&s\quad \mbox{ as } n\to\infty,
\end{eqnarray}
for $c(d)>>s$. In particular we will
use that :
\begin{eqnarray*}
\mu^{0,a}_{{\Lambda_n}}({\varphi_{x}\in[-a,a]})-\mu^{0,a}_{{\Lambda_n}}({\varphi_{x}+s\in[-a,a]})
&\geq&C_1(d,a)\cdot s,
\end{eqnarray*}
for some $C_1(d,a)>0$. 
\begin{align*}
f_n^\bfq(\ee)\geq
n^{-d}\sum_{x\in\Lambda_n}&(b\cdot e_x+h)\left(\mu^{0,a}_{\Lambda_n} (\varphi_x\in[-a,a])-C_1(d,a)s\ind{b\cdot e_x+h<0}\right)\\
&-\frac{s^2n^{-d}}{4d} \sum_{\substack{\{x,y\}\cap\Lambda_n\neq\varnothing\\x\sim
  y}} (\ind{b\cdot e_x+h<0}-\ind{b\cdot e_y+h<0})^2-Cn^{-1}
\end{align*}
Observe that
$\mu^{0,a}_{\Lambda_n} (\varphi_x\in[-a,a])=\mu ^{0,0}_{\Lambda_n} (\varphi_x\in[-2a,0])\geq\mu ^{0,0}_\infty(\varphi_x\in[-2a,0])\geq
C_2(d,a)$ for some $C_2(d,a)>0$.

By taking the
expectation with respect to the environment, using the bounded
convergence theorem and the fact that $f^\bfq(\ee)=\E(f^\bfq(\ee))$
(cf.\cite[Theorem 2.1]{CoqMil2013_1}) we get :
{
\begin{eqnarray}\label{parabola}
f^\bfq(\ee)=\lim_{n\to\infty}\E f^\bfq_{\Lambda_n}(\ee)&\geq&
h C_2(d,a)-\frac{sC_1(d,a)}{2}(-b+h)-\frac{s^2}{16} 
\end{eqnarray}
We may optimize over $s$ as the left hand side does not depend on it. Doing this one checks that $f^\bfq(\ee)> 0$ as soon as 
\[
h>-\frac{C_1(d,a)}{C_2(d,a)}\cdot (-b+h)^2=:-K(d,a)\cdot (-b+h)^2
\]    
This gives the implicit equation in terms of the variance $b^2$ of $b\cdot e_x+h$ :
\begin{eqnarray*}  
h>b-\frac1{2K}+\frac12\sqrt{\frac1{K^2}-\frac{8b}{K}} 
= -K b^2+O(b^3)
\end{eqnarray*}
\noindent The annealed critical curve as well as this bound are drawn on Figure \ref{fig:critical_curves}.
\noindent We recall that \eqref{overlap_gaussian} is valid under
assumption that $s$ is small. The maximum of (\ref{parabola}) is
realized at $s_{\max}=-4C_1\cdot (-b+h)$, thus it is enough to assume that $(-b+h)$ is small.
} 
\qed

\subsubsection{Case $d=2$}
In the case $d=2$, the variance of the Gaussian free field diverges
with the size of the box, so we cannot use the previous estimates. To circumvent this problem we introduce the so-called massive free field. Let $m>0$,
	\begin{equation}
	\mum^{0,\zeta}_{{\Lambda_n},m}(d{\varphi})=\frac{1}{\Zm_{{\Lambda_n},m}^{0,\zeta}}\exp\left(
-\calH_{\Lambda_n}(\varphi)
- m^2\sum_{x\in \Lambda_n }(\varphi_x-\zeta)^2\right)\prod_{x\in\Lambda_n}d{\varphi_{x}}\prod_{x\in\partial\Lambda_n}\delta_{0}(d{\varphi_{x}}),
	\end{equation}
where $\calH_\Lambda(\varphi)$ is defined in \eqref{eq:gffHamiltonian}.
Known facts about this model can be found in 
\cite[Section 3.3]{DemFun2005}. In particular, the random walk
representation for the massless GFF \cite[(1.3)]{BolVel2001} is still true, but for a random walk $Y_t$
that is killed with rate $\xi(m)=\frac{m^2}{1+m^2}$, namely at each
time $\ell$, if the walk has not already been killed, it is killed
with probability $\xi(m)$, where the killing is independent of the
walk. We write its law $P_x$ when it starts at $x$.
\newpage
\begin{lemma}\label{lem:massive}  Let $d=2$. Then,
\begin{enumerate}    
\item There exists some $C_1>0$  such that for $n$ large enough,
  $m>0$ small enough and
all $x\in\Lambda_n$, 
$$\mum^{0,0}_{{\Lambda_n},m}(\varphi^2_x)\leq
C_1|\log(m)|.$$
\item There exists some $C_2>0$ such that for $n$ large enough and $m>0$
  small enough, we have
\[
	n^{-2}\log\frac{\Zm^{0,0}_{{\Lambda_n},m}}{Z^{0,0}_{{\Lambda_n}}} \geq -C_2m^2|\log(m)|.
\]
\end{enumerate}
\end{lemma}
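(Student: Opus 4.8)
The plan is to rely on the random-walk representation of the massive free field, which holds with the walk $Y_t$ killed at rate $\xi(m)=\frac{m^2}{1+m^2}$. Under this representation, the covariance of the massive GFF with zero boundary condition is given by the Green function of the killed walk, namely $\mum^{0,0}_{\Lambda_n,m}(\varphi_x\varphi_y)=G^{m}_{\Lambda_n}(x,y)$, where $G^{m}_{\Lambda_n}(x,y)=\sum_{t\geq 0} P_x(Y_t=y,\ t<\tau_{\partial\Lambda_n}\wedge\tau_{\mathrm{kill}})$ is the expected number of visits to $y$ before the walk either exits $\Lambda_n$ or is killed. For part (1) I would therefore need to estimate the diagonal term $G^m_{\Lambda_n}(x,x)$ and show it is $O(|\log m|)$ uniformly in $x$ and in $n$ (for $n$ large). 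The standard massless two-dimensional Green function grows like $\log n$, but the killing at rate $\xi(m)\asymp m^2$ effectively truncates the walk after a geometric number of steps with mean $\xi(m)^{-1}\asymp m^{-2}$; since the simple random walk in $d=2$ spreads to distance $\sqrt{t}$ in $t$ steps, this truncation acts like confining the walk to a box of radius $\asymp m^{-1}$, and the local time at the origin of a two-dimensional walk run for time $T$ is of order $\log T \asymp \log(m^{-2}) \asymp |\log m|$. Concretely, I would bound $G^m_{\Lambda_n}(x,x)\leq \sum_{t\geq 0}(1-\xi(m))^t\, p_t(x,x)$, where $p_t(x,x)$ is the simple-random-walk return probability, and then split the sum: for small $t$ use $p_t(x,x)\asymp t^{-1}$ (local CLT) so that $\sum_{t\leq m^{-2}} t^{-1}\asymp |\log m|$, while for $t\gtrsim m^{-2}$ the geometric factor $(1-\xi(m))^t$ decays and contributes an $O(1)$ tail.

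For part (2) the plan is to compare the massive partition function $\Zm^{0,0}_{\Lambda_n,m}$ to the massless one $Z^{0,0}_{\Lambda_n}$ directly, since the only difference in the density is the extra factor $\exp(-m^2\sum_{x\in\Lambda_n}\varphi_x^2)$. Writing
\[
\frac{\Zm^{0,0}_{\Lambda_n,m}}{Z^{0,0}_{\Lambda_n}}
=\mu^{0,0}_{\Lambda_n}\!\left(\exp\Bigl(-m^2\sum_{x\in\Lambda_n}\varphi_x^2\Bigr)\right),
\]
I would apply Jensen's inequality to the convex function $\exp$ to pass the expectation inside:
\[
\mu^{0,0}_{\Lambda_n}\!\left(\exp\Bigl(-m^2\sum_{x\in\Lambda_n}\varphi_x^2\Bigr)\right)
\geq \exp\Bigl(-m^2\sum_{x\in\Lambda_n}\mu^{0,0}_{\Lambda_n}(\varphi_x^2)\Bigr).
\]
Taking logarithms and dividing by $n^2$ reduces the claim to controlling $n^{-2}\sum_{x\in\Lambda_n}\mu^{0,0}_{\Lambda_n}(\varphi_x^2)$. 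Here I would use the analogue of part (1) for the massless field — or, to keep the bound clean, I would instead insert the massive measure via a second comparison — but the cleanest route is to note that the typical variance of the massless two-dimensional GFF in $\Lambda_n$ is $O(\log n)$, which is too crude if taken naively. The correct move is to not use the massless variance but rather to bound the whole quantity by the massive variance estimate of part (1): one rewrites the ratio symmetrically and applies Jensen so that the exponent involves $\mum^{0,0}_{\Lambda_n,m}(\varphi_x^2)\leq C_1|\log m|$ from part (1), yielding $n^{-2}\log(\cdots)\geq -C_2 m^2 |\log m|$ with the average over $x$ absorbing the $n^{-2}|\Lambda_n|=1$ factor.

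The main obstacle I anticipate is getting the \emph{uniform in $n$} control needed in part (1): the bare two-dimensional Green function on $\Lambda_n$ with Dirichlet boundary conditions grows like $\log n$, and it is precisely the mass that must cut this growth off at scale $|\log m|$ independently of how large $n$ is. This requires the comparison $G^m_{\Lambda_n}(x,x)\leq G^m_{\Z^2}(x,x)$ (monotonicity of the killed-walk Green function under enlarging the domain, since killing at the boundary only removes paths), followed by the sharp local-time estimate for the killed walk on the full lattice $\Z^2$. Making the split between the $t\leq m^{-2}$ and $t> m^{-2}$ regimes rigorous, and verifying that the local CLT bound $p_t(x,x)\asymp t^{-1}$ holds with constants uniform in $x$, is the technical heart; once that diagonal bound is in place, part (2) follows by the Jensen comparison above with essentially no further difficulty.
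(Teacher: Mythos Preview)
Your treatment of part (1) coincides with the paper's: bound by the full-lattice massive Green function $\sum_{\ell\geq 0}(1-\xi(m))^\ell P_0(X_\ell=0)$ and use $P_0(X_{2\ell}=0)\asymp(\pi\ell)^{-1}$.

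Part (2), however, has a gap. You correctly observe that Jensen with the massless measure gives $n^{-2}\log(\Zm^{0,0}_{\Lambda_n,m}/Z^{0,0}_{\Lambda_n})\geq -m^2 n^{-2}\sum_x\mu^{0,0}_{\Lambda_n}(\varphi_x^2)$, and that the right-hand side is only of order $-m^2\log n$, useless for large $n$. But your proposed repair --- ``rewrite the ratio symmetrically and apply Jensen so that the exponent involves $\mum^{0,0}_{\Lambda_n,m}(\varphi_x^2)$'' --- does not work: the symmetric identity is $Z^{0,0}_{\Lambda_n}/\Zm^{0,0}_{\Lambda_n,m}=\mum^{0,0}_{\Lambda_n,m}\bigl(\exp(m^2\sum_x\varphi_x^2)\bigr)$, and Jensen on the convex exponential yields a \emph{lower} bound on this ratio, i.e.\ an \emph{upper} bound on $n^{-2}\log(\Zm/Z)$, which is the wrong direction. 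No single Jensen step puts the massive variance on the side you need.

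The paper avoids Jensen and instead uses the loop representation of $\log Z$ (from Bolthausen--Ioffe), which bounds $|\Lambda_n|^{-1}\log(Z^{0,0}_{\Lambda_n}/\Zm^{0,0}_{\Lambda_n,m})$ by $\tfrac12\sum_\ell \tfrac{1}{2\ell}P_0(X_{2\ell}=0)\bigl(1-(1-\xi(m))^{2\ell}\bigr)$; the same local-CLT input as in part (1) then gives $O(m^2|\log m|)$. A fix closer to your line of thought is to interpolate in the mass rather than apply Jensen: since $\frac{\partial}{\partial(m^2)}\log\Zm^{0,0}_{\Lambda_n,m}=-\sum_x\mum^{0,0}_{\Lambda_n,m}(\varphi_x^2)$, integrating from $0$ to $m^2$ and inserting part (1) at each intermediate mass gives $n^{-2}\log(\Zm^{0,0}_{\Lambda_n,m}/Z^{0,0}_{\Lambda_n})\geq -C_1\int_0^{m^2}|\log\sqrt t|\,dt\geq -C_2\,m^2|\log m|$.
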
  
\begin{proof}
These bounds are rather standard. We give here the main
steps of the proofs with some references. For the first claim, we use
the random walk representation \cite{DemFun2005} to write
\[   
\mum^{0,0}_{{\Lambda_n},m}(\varphi_x^2)=\sum_{\ell=0}^\infty P_x(Y_\ell=x\,,\,
\tau_{\Lambda_n}\wedge\aleph>\ell)=
\sum_{\ell=0}^\infty (1-\xi(m))^\ell P_x(Y_\ell=x\,,\,
\tau_{\Lambda_n}>\ell)   
\]
where $\tau_{\Lambda_n}$ is the first exit time of $\Lambda_n$ and
$\aleph$ is the killing time of the random walk
$Y_t$. Hence,
\begin{eqnarray}\label{eq:massive_variance}
\mum^{0,0}_{{\Lambda_n},m}(\varphi_x^2)\leq \mum^{0,0}_{{\Lambda_n},m}(\varphi_0^2)\leq
\sum_{\ell=1}^\infty (1-\xi(m))^\ell P_0(X_\ell=0)
\end{eqnarray}
where $X_\ell$ is a simple random walk (without killing). The projections of $X_\ell$ onto the two coordinate axis are two
independent $1-$dimensional random walks $X_\ell^1$ and $X_\ell^2$, then by Stirling formula,
\[
P_0(X_{2 \ell}=0)=(P_0(X_{2 \ell}^1=0))^2=\left(\binom{2 \ell}{\ell}2^{-2 \ell}\right)^2=\frac1{\pi
  \ell}(1+o(1))\quad\mbox{ as }\ell\to\infty
\]
The asymptotics of (\ref{eq:massive_variance}) for small $m$ gives the
desired upper-bound.\\
To prove the second claim we use the representation of the partition
function described in 
\cite[p.542]{BolIof1997} (it applies to the massive GFF with an
obvious modification). We denote by $\tilde P$ the coupling of a random
walk $X_n$ and a killed random walk $Y_n$ such that $Y_n=X_n$ up to
its killing time $\aleph$, 
	\begin{eqnarray}\label{eq:ratio_Z}
	|\Lambda_n|^{-1}\log\frac{Z_{{\Lambda_n}}^{0,0}}{\Zm_{{\Lambda_n},m}^{0,0}}
        &=&|\Lambda_n|^{-1}\left(\frac12\sum_{x\in\Lambda_n}\sum_{\ell=1}^{\infty}\frac{1}{2
            \ell}\left(\tilde P_x (X_{2 \ell}=x,\tau_{\Lambda_n}>2 \ell)-\tilde P_x (Y_{2 \ell}=x,\tau_{\Lambda_n}\wedge\aleph>2 \ell)\right) \right) \nonumber\\
	 	&\leq& \frac12\sum_{\ell=1}^{\infty}\frac{1}{2 \ell}\left(\tilde P_0(X_{2 \ell}=0,\tau_{\Lambda_n}>2 \ell)-\tilde P_0(Y_{2 \ell}=0,\tau_{\Lambda_n}\wedge\aleph>2 \ell)\right) \nonumber\\
	 	&=& \frac12\sum_{\ell=1}^{\infty}\frac{1}{2
                  \ell}\tilde P_0(X_{2
                    \ell}=0,\tau_{\Lambda_n}>2 \ell,\aleph\leq 2 \ell)\nonumber\\
&\leq&\frac12\sum_{\ell=1}^{\infty}\frac{1}{2 \ell}\tilde
                P_0(X_{2 \ell}=0) \left(1-(1-\xi(m))^{2\ell}\right)
	\end{eqnarray}	
Using the same estimate as in
(\ref{eq:massive_variance}), the asymptotics of (\ref{eq:ratio_Z}) for small $m$ gives the
desired upper-bound.
\end{proof}

\noindent The idea is to tilt the measure, as in the proof for $d\geq3$, first to work with the
massive measure, and second to follow the environment such that the field $\varphi$ is
shifted up of an amount $s$ on the sites $x$ for which $b\cdot e_x+h<0$. For some technical
reason, we need to work with the measure with boundary condition $a$,
so we perform three changes of measure (first one for changing boundary
condition, a second one for adding mass, and a third one for following
the environment).\\
Let $s>0$ and $m>0$ to be fixed later.
\begin{multline*}
f^\bfq_{{\Lambda_n}}(\ee)  
=
 n^{-2}\log
 \mu^{0,0}_{\Lambda_n}\left(\exp\sum_{x\in\Lambda_n}(b\cdot e_x+h)\ind{\phi_x\in[-a,a]}\right)\\
= n^{-2}\log
\mum^{0,\zeta}_{{\Lambda_n},m,\ee,s}\left(\exp\left(\sum_{x\in\Lambda_n}(b\cdot
    e_x+h)\ind{\phi_x\in[-a,a]}\right.\right.\\
\left.\left.
+\log\frac{\mathrm
  d\mu^{0,0}_{{\Lambda_n}}}{\mathrm d\mu^{0,\zeta}_{{\Lambda_n}}}
+\log\frac{\mathrm
d\mu^{0,\zeta}_{{\Lambda_n}}}{\mathrm d\mum^{0,\zeta}_{{\Lambda_n},m}}
+\log\frac{\mathrm
d\mum^{0,\zeta}_{{\Lambda_n},m}}{\mathrm d\mum^{0,\zeta}_{{\Lambda_n},m,\ee,s}}\right)\right)
\end{multline*}
where $\left({\varphi_{x}}\right)_{x\in\Lambda_n}$ under $\mum^{0,\zeta}_{{\Lambda_n},m,\ee,s}$
is distributed as $\left({\varphi_{x}+s\ind{{(b\cdot e_x+h)<0}}}\right)_{x\in\Lambda_n}$
under $\mum^{0,\zeta}_{{\Lambda_n},m}$ ; more formally, introducing 
$T_{\ee,s}:
(\left({\varphi_{x}}\right)_{x\in\Lambda_n})\mapsto\left({\varphi_{x}+s\ind{{(b\cdot e_x+h)<0}}}\right)_{x\in\Lambda_n}$,
we define $\mum^{0,\zeta}_{{\Lambda_n},m,\ee,s}$ as $\mum^{0,\zeta}_{{\Lambda_n},m}\circ T_{\ee,s}^{-1}$.
Using Jensen's inequality we get
\begin{eqnarray*}
f^\bfq_{{\Lambda_n}}(\ee)&\geq& n^{-2}
\mum^{0,\zeta}_{{\Lambda_n},m,\ee,s}\left(\sum_{x\in\Lambda_n}(b\cdot e_x+h)\ind{\varphi_x\in[-a,a]}
+\underbrace{\log\frac{\mathrm
  d\mu^{0,0}_{{\Lambda_n}}}{\mathrm d\mu^{0,\zeta}_{{\Lambda_n}}}}_{(1)}
+\underbrace{\log\frac{\mathrm
d\mu^{0,\zeta}_{{\Lambda_n}}}{\mathrm d\mum^{0,\zeta}_{{\Lambda_n},m}}}_{(2)}
+\underbrace{\log\frac{\mathrm
d\mum^{0,\zeta}_{{\Lambda_n},m}}{\mathrm d\mum^{0,\zeta}_{{\Lambda_n},m,\ee,s}}}_{(3)}\right)
\end{eqnarray*}
As in the proof for $d\geq3$, we have
$$n^{-2}\mum^{0,\zeta}_{{\Lambda_n},m,\ee,s}((1))\geq-Cn^{-1}.$$
By Lemma \ref{lem:massive}, we have
$\frac{\Zm^{0,\zeta}_{{\Lambda_n},m}}{Z^{0,\zeta}_{{\Lambda_n}}}=\frac{\Zm^{0,\zeta}_{{\Lambda_n},m}}{\Zm^{0,0}_{{\Lambda_n},m}}\frac{\Zm^{0,0}_{{\Lambda_n},m}}{Z^{0,0}_{{\Lambda_n}}}\frac{Z^{0,0}_{{\Lambda_n}}}{Z^{0,\zeta}_{{\Lambda_n}}}\geq
-Cn-C_2n^2m^2|\log m|$,
and then
\[
(2)
=\log\left(\frac{\Zm^{0,\zeta}_{{\Lambda_n},m}}{Z^{0,\zeta}_{\Lambda_n}}\right)+m^2\sum_{x\in\Lambda_n}\varphi_x^2
\geq-Cn -C_2n^2m^2|\log m|+ m^2\sum_{x\in\Lambda_n}\varphi_x^2
\]
hence,
\[
n^{-2}\mum^{0,\zeta}_{{\Lambda_n},m,\ee,s}((2))\geq-Cn^{-1} -C_2m^2|\log m|.
\]
Finally, noticing that $\Zm^{0,\zeta}_{{\Lambda_n},m,\ee,s}=\Zm^{0,\zeta}_{{\Lambda_n},m}$ (just perform a change
of variables in the Gaussian integral), we can compute the third
term.
\[
(3)=-\frac1{8}\sum_{\substack{\{x,y\}\cap\Lambda_n\neq\varnothing\\x\sim
   y}}(\varphi_x-\varphi_y)^2-(\hat\varphi_x-\hat\varphi_y)^2-m^2\sum_{x\in\Lambda_n}(\varphi_x-s)^2-(\hat\varphi_x-s)^2
\]
where $\hat\varphi_x:=\varphi_x+s\ind{b\cdot e_x+h<0}$. Now we will use the fact that the marginal laws of all $\varphi_x$, $x\in\Lambda_n$ under
$\mum^{0,\zeta}_{{\Lambda_n},m}$ are Gaussian variables,
i.e.\ $\varphi_{x}\sim\mathcal{N}(\mu_n^x,{\sigma_n^x}^2)$ where
$\mu_n^x\approx \zeta$ except for $x$ close to the boundary of the
box. Indeed, by the random walk representation of the mean, there is $C>0$ such that 
  $|\mum^{0,\zeta}_{\Lambda_n,m}(\varphi_x)-\zeta|\leq C(1+m^2)^{-\Vert x-\partial\Lambda_n\Vert}$. Moreover, ${(\sigma_n^x)}^2=\mbox{\textbf{V}ar}^{0,\zeta}_{{\Lambda_n},m}(\varphi_x)\leq C_1|\log
m|$. Using the
definition of $\mum^{0,\zeta}_{{\Lambda_n},m,\ee,s}$, and computing the terms as in the proof for $d\geq3$,
\begin{eqnarray*}
n^{-2} \mum^{0,\zeta}_{{\Lambda_n},m,\ee,s}((3))
&\geq& -\frac{s^2}{8n^{2}} \sum_{\substack{\{x,y\}\cap\Lambda_n\neq\varnothing\\x\sim
  y}} (\ind{b\cdot e_x+h<0} -\ind{b\cdot e_y+h<0})^2 -\frac{ m^2
s^2}{n^{2}} \sum_{x\in \Lambda_n} \ind{b\cdot e_x+h<0} +\frac Cn.
\end{eqnarray*}
We get, for $n$ large enough and $m$ small enough
\begin{eqnarray*}
f_{\Lambda_n}^\bfq(\ee)&\geq&
n^{-2}\sum_{x\in\Lambda_n}(b\cdot e_x+h)\mum^{0,\zeta}_{{\Lambda_n},m}(\hat\varphi_x\in[-a,a])
-\frac{s^2}{8n^{2}} \sum_{\substack{\{x,y\}\cap\Lambda_n\neq\varnothing\\x\sim
  y}} (\ind{b\cdot e_x+h<0}-\ind{b\cdot e_y+h<0})^2\\
&&- \frac{ m^2 s^2}{n^2} \sum_{x\in \Lambda_n} \ind{b\cdot e_x+h<0} -C'm^2|\log m|-Cn^{-1},
\end{eqnarray*}  
for some $C$ and $C'>0$.
Note that for $O(n^2)$ sites $x$, we have
\begin{eqnarray}\label{overlap_gaussian_d2}
\mum^{0,\zeta}_{{\Lambda_n},m}({\varphi_{x}\in[-a,a]})-\mum^{0,\zeta}_{{\Lambda_n},m}({\varphi_{x}+s\in[-a,a]})
&\asymp&\Phi'_{\zeta,b}(a)\cdot s\quad \mbox{ as } n\to\infty,
\end{eqnarray}
for $s<<a\leq \zeta=C_1|\log m|$, and $b=C_1|\log m|$. Above $\Phi_{\zeta,b}$ stands for
the p.d.f. of the above Gaussian distribution with mean $\zeta$ and
variance $b^2$. In particular, for a positive fraction of $x$
(close to 1) and $m$ sufficiently small, we have the upper bound :
\begin{eqnarray*}
\mum^{0,\zeta}_{{\Lambda_n},m}({\varphi_{x}\in[-a,a]})-\mum^{0,\zeta}_{{\Lambda_n},m}({\varphi_{x}+s\in[-a,a]})
&\geq&\frac{C_1(a)}{|\log m|}\cdot s,
\end{eqnarray*}
for some $C_1(a)>0$. Now we can compute :
\begin{eqnarray*}
f_{\Lambda_n}^\bfq(\ee)&\geq&
n^{-2}\sum_{x\in\Lambda_n}(b\cdot e_x+h) (\mum^{0,\zeta}_{{\Lambda_n},m}(\varphi_x\in[-a,a])-\frac{C_1(a)}{|\log
  m|}s\ind{b\cdot e_x+h<0})\\
&&-\frac{s^2}{8n^2} \sum_{\substack{\{x,y\}\cap\Lambda_n\neq\varnothing\\x\sim
  y}} (\ind{b\cdot e_x+h<0}-\ind{b\cdot e_y+h<0})^2- \frac{m^2 s^2}{n^2}
\sum_{x\in \Lambda_n} \ind{b\cdot e_x+h<0}\\
&&-C'm^2|\log m|-Cn^{-1}
\end{eqnarray*}
Observe that $\mum^{0,\zeta}_{{\Lambda_n},m}(\varphi_x\in[-a,a])\geq
2a\cdot\Phi'_{\zeta,b^2}(-a)=\frac{\tilde C_1(a)}{|\log m|}$ uniformly in $x\in\Lambda_n$.
Let us take the
expectation with respect to the environment, and use the bounded
convergence theorem and \cite[Theorem 2.1]{CoqMil2013_1}, we get :
{
\begin{eqnarray}\label{parabola_d2}
f^\bfq(\ee)=\lim_{n\to\infty}\E f^\bfq_{{\Lambda_n}}(\ee)&\geq&
h \frac{\tilde C_1(a)}{|\log m|}-s\cdot\frac{C_1(a) (-b+h)}{2|\log
  m|}-\frac{s^2 m^2}2 -\frac{ s^2}{16}-C'm^2|\log m|.
\end{eqnarray}
Our aim now is to show that the right hand side can be positive even
when $h$ is negative. In the above expression $s,m$ are free
parameters which we may vary. However, we have to remember that both
$s$ and $m$ need to be small enough, which makes standard optimization
analysis cumbersome. We are going to show that there exists $C>0$ and
$\epsilon>0$ such that for any $b,h$ such that $(-b+h) \in (-\epsilon,0)$ and
\[
	h := C \frac{(-b+h)^2}{\log (-(-b+h))},
\]
there exist small $s$ and $m$ such that the r.h.s.\ of \eqref{parabola_d2}
is positive. Notice that the result will imply that for any $h
\geq C \frac{(-b+h)^2}{\log (-(-b+h))}$ the free energy is positive. Let us choose the value of $s$ which
maximizes \eqref{parabola_d2} for fixed $m$, i.e.\
\begin{equation}
	s =  - \frac{C_1(a) (-b+h)}{ (m^2 + 1/4) |\log(m)|}. \label{eq:vache1}
\end{equation}
and for $m$ let us take
\begin{equation}
	m^2 = - k/(\log k)^3,  \text{ where } k := -h \tilde{C}_1(a)/C'. \label{eq:vache2}
\end{equation}
One can verify that with the above choice of parameters both $s$ and
$m$ are as small as we want. Let us first put \eqref{eq:vache1} into
the r.h.s.\ of \eqref{parabola_d2} and obtain
\begin{eqnarray*}
f^\bfq(\ee)&\geq&
h \frac{\tilde C_1(a)}{|\log m|}  + \frac{C_1(a)^2  (-b+h)^2}{(m^2+ 1/4)(\log m)^2}-C'm^2|\log m|.
\end{eqnarray*}
For $k$ and consequently $m$ small enough we have
\begin{eqnarray*}
f^\bfq(\ee)&\geq&
h \frac{\tilde C_1(a)}{|\log m|}  + \frac{C_1(a)^2  (-b+h)^2}{2(\log m)^2}-C'm^2|\log m|.
\end{eqnarray*}
Further let us multiply both sides by $(\log m)^2$ and insert \eqref{eq:vache2}.
\begin{eqnarray*}
	f^\bfq(\ee) (\log m)^2 &\geq& -h \tilde C_1(a) \log m  +
        \frac{C_1(a)^2 (-b+h)^2}{2}+C'm^2(\log m)^3 \\
	&=& -\frac{h \tilde C_1(a)}2 (\log k - 3 \log (|\log k|))  +
        \frac{C_1(a)^2  (-b+h)^2}{2}-C' \frac {k (\log k - 3 \log (|\log k|))^3}{8(\log k)^3} \\
	&=&  -h\frac{ \tilde C_1(a)}2 \log ( - h \tilde{C}_1(a)/C')  + \frac{C_1(a)^2
           (-b+h)^2}{2} +h \frac{\tilde{C}_1(a)}8+o(h)\quad \mbox{as} \quad |h|\to0
\end{eqnarray*}
From the last claim it is straightforward to conclude existence of $C$
(sufficiently small) and $\epsilon$ with the properties described above.\qed

}
\strut
 
\paragraph{Acknowledgements.} We would like to warmly acknowledge Yvan
Velenik for introducing us to the topic.
L.C.\ was partially supported by the Swiss National
Foundation. P.M.\ was supported by a Sciex Fellowship grant no.\
10.044.

\bibliographystyle{abbrv}
\bibliography{biblio_short}

\end{document}